\newcommand{\catD}{{D}}
\newcommand{\dbc}[1]{{D}^b(#1)}
\newcommand{\cdbc}[1]{{D}^b_c(#1)}
\newcommand{\fmf}[3]{{\Phi^{#1}_{{\scriptscriptstyle #2\!\rightarrow\! #3}}}}
\newcommand{\Hom}{{\operatorname{Hom}}}
\newcommand{\Kos}{{\operatorname{Kos}^{\scriptscriptstyle\bullet}}}
\newcommand{\Ext}{{\operatorname{Ext}}}
\newcommand{\Tor}{{\operatorname{Tor}}}
\DeclareMathOperator{\Spec}{{Spec}}
\DeclareMathOperator{\depth}{{depth}}
\DeclareMathOperator{\codepth}{{codepth}}
\DeclareMathOperator{\codim}{{codim}}
\DeclareMathOperator{\supp}{{supp}}
\newcommand{\cF}{{\mathcal F}}
\newcommand{\calH}{{\mathcal H}}
\newcommand{\cK}{{\mathcal K}}
\newcommand{\cM}{{\mathcal M}}
\newcommand{\cN}{{\mathcal N}}
\newcommand{\cO}{{\mathcal O}}
\newcommand{\bR}{{\mathbf R}}
\newcommand{\cplx}[1]{{{\mathcal #1}^{\scriptscriptstyle\bullet}}}
\newcommand{\marginnote}[1]{\ifthenelse{\isodd{\thepage}}{\normalmarginpar}
{\reversemarginpar}\marginpar{\fbox{\parbox{24mm}{\sloppy\footnotesize
#1}}}}
\DeclareMathSymbol{\functor}{\mathbin}{AMSa}{"20}
 \newtheorem{thm}{Theorem}[section]
 \newtheorem*{thm*}{Theorem}
 \newtheorem{cor}[thm]{Corollary}
 \newtheorem{lem}[thm]{Lemma}
 \newtheorem{prop}[thm]{Proposition}
 \theoremstyle{definition}
 \newtheorem{defin}[thm]{Definition}
 \newenvironment{defn}{\begin{defin}}{\hfill\hspace{1pt}$\triangle$\end{defin}}
 \theoremstyle{remark}
 \newtheorem{rema}[thm]{Remark}
 \newtheorem{exe}[thm]{Example}
\numberwithin{equation}{section}
\begin{document}
\title{KOSZUL COMPLEXES AND FULLY FAITHFUL INTEGRAL FUNCTORS}

\author[Fernando Sancho de Salas]{Fernando Sancho de Salas}
\email{fsancho@usal.es}
\address{Departamento de Matem\'aticas, Universidad de Salamanca, Plaza
de la Merced 1-4, 37008 Salamanca, Spain}
\date{\today}
\thanks {Work supported by research projects MTM2006-04779 (MEC)
and SA001A07 (JCYL)} \subjclass[2000]{Primary: 18E30; Secondary:
14F05, 14J27, 14E30, 13D22, 14M05} \keywords{Geometric integral
functors, Fourier-Mukai,  fully faithful, equivalence of
categories}
\begin{abstract} We characterise those objects in the derived category of a
scheme which are a sheaf supported on a closed subscheme in terms
of Koszul complexes. This is applied to generalize  to arbitrary
schemes the fully faithfullness criteria of an integral functor.

\end{abstract}
\maketitle

{\small \tableofcontents }

\section*{Introduction} Let $X,Y$ be two proper schemes over a field $k$ and let $$\Phi\colon \cdbc{X}\to \cdbc{Y}$$
be an integral functor between their derived categories of
complexes of quasi-coherent modules with bounded and coherent
cohomology. Let $\cplx{K}\in \cdbc{X\times Y}$ be the kernel of
$\Phi$. We want to characterise those kernels $\cplx{K}$ such that
$\Phi$ is a fully faithful. This was solved in \cite{BO95} for
smooth projective schemes over a field of zero characteristic. For
Gorenstein schemes and zero characteristic it was solved in
\cite{HLS05}. For Cohen-Macaulay schemes and arbitrary
characteristic it was solved in \cite{HLS??}. Here we remove the
Cohen-Macaulay hypothesis and reproduce the fully faithfullness
criteria of \cite{HLS??} for arbitrary schemes. The point is to
replace the locally complete intersection zero-cycles of
\cite{HLS??} by Koszul complexes associated to a system of
parameters. These Koszul complexes allow to characterise, for an
arbitrary scheme $X$, those objects in $\cdbc{X}$ consisting of a
sheaf supported on a closed subscheme (Propositions
\ref{1:support} and  \ref{1:support2}). This is the main
ingredient for the fully faithfullness criteria.

\medskip


\subsection*{Acknowledgements} I would like to thank Leovigildo
Alonso, who suggested to me the use of Koszul complexes to deal
with the general (non Cohen-Macaulay) case.

\section{Koszul complexes, depth and support}

We introduce Koszul complexes and use them to characterize those
objects in the derived category consisting of a sheaf supported on
a closed subscheme.

\subsection{System of parameters. Koszul complex}. Let $\cO$ be a noetherian local ring of dimension $n$
and maximal ideal $\mathfrak m$. Let $x$ be the closed point.

\begin{defn} A sequence $f = \{ f_1,\dots, f_n \}$ of $n$ elements in
$\mathfrak m$ is called a system of parameters of $\cO$ if
$\cO/(f_1,\dots, f_n)$ is a zero dimensional ring. In other words,
$(f_1,\dots,f_n)$ is a $\mathfrak m$-primary ideal. We shall also
denote $\cO/f=\cO/(f_1,\dots, f_n)$.\end{defn}

It is a basic fact of dimension theory that there always exists a
system of parameters. In fact, for any $\mathfrak m$-primary ideal
$I$, there exist $f_1,\dots,f_n$ in $I$ which are a system of
parameters of $\cO$.

We shall denote by $\Kos (f)$ the Koszul complex associated to a
system of parameters $f$. That is, if we denote $L=\cO^{\oplus n}$
and $\omega\colon L\to\cO$ the morphism given by $f_1,\dots,f_n$,
then the  Koszul complex is $\bigwedge^i_\cO L$ in degree $-i$ and
the differential $\bigwedge^i_\cO L\to \bigwedge^{i-1}_\cO L$ is
the inner contraction with $\omega$. It is immediate to see that
$\Hom^{\scriptscriptstyle\bullet}(\Kos(f),\cO)\simeq\Kos(f)[-n]$.

The cohomology modules $H^i(\Kos(f))$ are supported at $x$ (indeed
they are annihilated by $(f_1,\dots,f_n)$). Moreover $H^0(\Kos
(f))=\cO/f$ and $H^i(\Kos (f))=0$ for $i>0$ and $i<-n$.

For any complex  $\cplx{M}$ of $\cO$-modules, we shall denote
\[\aligned \Tor_i^\cO
(\Kos(f),\cplx{M})&=H^{-i}(\Kos(f)\underset{\cO}\otimes
\cplx {M})\\
\Ext_{\cO}^i
(\Kos(f),\cplx{M})&=H^i(\Hom_\cO^{\scriptscriptstyle\bullet}(\Kos(f),
\cplx {M}))
\endaligned\] From the isomorphism
$\Hom^{\scriptscriptstyle\bullet}(\Kos(f),\cO)\simeq\Kos(f)[-n]$
it follows easily that
\begin{equation}\label{ext-tor}\Ext_{\cO}^i
(\Kos(f),\cplx{M})\simeq \Tor_{n-i}^\cO
(\Kos(f),\cplx{M}).\end{equation}

\subsection{Depth. Singularity set}

The depth of an $\cO$-module $M$, $\depth(M)$, is the first
integer $i$ such that either:
\begin{itemize}
\item $\Ext^i(\cO/\mathfrak{m} ,M)\neq 0$ or
\item $H^i_x(\Spec\cO,M)\neq 0$ or
\item $\Ext_{\cO}^i(N,M)\neq 0$ for some non zero finite $\cO$-module $N$ supported at
$x$ or
\item $\Ext_{\cO}^i(N,M)\neq 0$ for any  non zero finite $\cO$-module $N$ supported at $x$.
\end{itemize}

\begin{lem} The depth of $M$ is the first integer
$i$ such that either:
\begin{itemize}
\item $\Ext_{\cO}^i(\Kos(f),M)\neq 0$ for some system of parameters $f $ of $\cO$ or
\item $\Ext_{\cO}^i(\Kos(f),M)\neq 0$ for every system of parameters $f $ of
$\cO$.
\end{itemize}
\end{lem}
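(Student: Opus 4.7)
The plan is to reduce the lemma to the fourth characterization of depth listed above: $d:=\depth M$ is the first integer with $\Ext_\cO^d(N,M)\neq 0$ for some, equivalently for every, nonzero finite $\cO$-module $N$ supported at $x$. The bridge from a single module $N$ to the complex $\Kos(f)$ is the hypercohomology spectral sequence
\[
E_2^{p,q}=\Ext_\cO^p\bigl(H^{-q}(\Kos(f)),M\bigr)\Longrightarrow\Ext_\cO^{p+q}(\Kos(f),M),
\]
which is available since $\Kos(f)$ is a bounded complex of finite free $\cO$-modules.

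First I would record the shape of the $E_2$ page. Each $H^{-q}(\Kos(f))$ is annihilated by $(f_1,\dots,f_n)$, hence is a finite $\cO$-module supported at $x$, nonzero at $q=0$ (where it equals $\cO/f$) and vanishing outside $0\le q\le n$. The depth characterization then forces $E_2^{p,q}=0$ whenever $p<d$, and since the strip condition $q\ge 0$ combined with $p+q<d$ yields $p<d$, every $E_2$ term in total degree less than $d$ vanishes. Consequently $\Ext_\cO^i(\Kos(f),M)=0$ for all $i<d$ and every system of parameters $f$.

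For the non-vanishing at $i=d$, I would show that the corner $E_2^{d,0}=\Ext_\cO^d(\cO/f,M)$---nonzero by the depth characterization applied to $N=\cO/f$---survives to $E_\infty$: incoming differentials $d_r\colon E_r^{d-r,r-1}\to E_r^{d,0}$ ($r\ge 2$) start in the zero column $p<d$, and outgoing differentials $d_r\colon E_r^{d,0}\to E_r^{d+r,1-r}$ land in the zero region $q<0$. Hence $E_\infty^{d,0}=E_2^{d,0}\neq 0$ and therefore $\Ext_\cO^d(\Kos(f),M)\neq 0$, again for every $f$. This establishes both clauses of the lemma at once, with common value $\depth M$; I do not foresee any real obstacle beyond the mild bookkeeping of the spectral sequence indexing, which has been arranged precisely so that the depth-induced column vanishing cooperates with the cohomological strip of $\Kos(f)$. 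An alternative route via induction on $n$ and the mapping-cone triangle $\Kos(f_1,\dots,f_{n-1})\xrightarrow{f_n}\Kos(f_1,\dots,f_{n-1})\to\Kos(f)$ is tempting, but would force one to track depth under quotients by possibly non-regular elements and seems strictly less clean.
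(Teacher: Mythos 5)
Your proof is correct and takes essentially the same route as the paper: the identical hyper-Ext spectral sequence $E_2^{p,q}=\Ext_\cO^p(H^{-q}(\Kos(f)),M)$, combined with the fact that the Koszul cohomologies are finite modules supported at $x$, gives the vanishing below $d=\depth M$ from the support of the $E_2$-page and the non-vanishing at $d$ from the survival of the corner term $E_2^{d,0}=\Ext_\cO^d(\cO/f,M)$. The only cosmetic difference is that the paper establishes the vanishing half by a contradiction argument with the truncation triangles $\Kos(f)_{\leq -i-1}\to\Kos(f)_{\leq -i}\to H^{-i}(\Kos(f))[i]$ rather than reading it directly off the $E_2$-page; your version is, if anything, the tidier execution.
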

\begin{proof} It is an easy consequence of the spectral sequence
\[ E_2^{p,q}=\Ext^p(H^{-q}(\Kos(f), M)\implies
E_\infty^{p+q}=\Ext^{p+q}(\Kos(f),M)\]

Indeed, let $d=\depth (M)$, $f $ a system of parameters
 of $\cO$  and  $r$  the first
integer such that $\Ext_{\cO}^i(\Kos(f),M)\neq 0$. Let us see that
$d=r$. Since $\Ext^d(H^0(\Kos(f)),M)\neq 0$, one obtains, by the
spectral sequence, that $\Ext_{\cO}^d(\Kos(f),M)\neq 0$. Hence
$d\geq r$.
Assume that $r\neq d$. Then $\Hom^{r-i}(H^{-i}(\Kos(f),M)=0$ for
any $i\geq 0$, because $H^{-i}(\Kos(f)$ is supported at $x$ and
$r-i <d$. From the exact triangles
\[ \Kos(f)_{\leq -i-1}\to \Kos(f)_{\leq -i }\to H^{-i}(\Kos(f)[i]\]
and taking into account that $\Hom^r(\Kos(f)_{\leq 0},M)=
\Hom^r(\Kos(f),M) \neq 0$ one obtains that $\Hom^r(\Kos(f)_{\leq
-i},M)\neq 0$ for any $i\geq 0$. This is absurd because
$\Kos(f)_{\leq -i}=0$ for $i>>0$.

\end{proof}

Let $\cF$ be a coherent sheaf on a scheme $X$ of dimension $n$. We
write $n_x$ for the dimension of the local ring $\cO_x$ of $X$ at
a point $x\in X$, $\cF_x$ for the stalk of $\cF$ at $x$ and $\bold
{k}(x)$ for the residual field of $x$. $\cF_x$ is a
$\cO_x$-module. The integer number
$\codepth(\cF_x)=n_x-\depth(\cF_x)$ is called the codepth of $\cF$
at $x$. For any integer $m\in\mathbb{Z}$, the \emph{$m$-th
singularity set} of $\cF$ is defined to be
$$
S_m(\cF)= \{x\in X
\;|\; \codepth(\cF_x)\geq n-m\}\, .
$$
Then, if $X$ is equidimensional, a closed point $x$ is in
$S_m(\cF)$ if and only if $\depth(\cF_x)\leq m$.

Since $\depth(\cF_x)$ is the first integer $i$ such that either
\begin{itemize}
\item $\Ext_{\cO_x}^i(\bold{k}(x),\cF_x)\neq 0$ or
\item $H^i_x(\cF_x)\neq 0$ or
\item $\Ext_{\cO_x}^i(\Kos(f_x),\cF_x)\neq 0$ for some system of parameters $f_x$ of $\cO_x$ or
\item $\Ext_{\cO_x}^i(\Kos(f_x),\cF_x)\neq 0$ for every system of parameters $f_x$ of $\cO_x$
\end{itemize}
we have alternative descriptions of $S_m(\cF)$:
\begin{equation} \label{e:descrip}
\begin{aligned}
S_m(\cF) &= \{x\in X
\;|\; H^i_x(\Spec\cO_{X,x},\cF_x)\neq 0 \text{ for some } i\leq m+n_x-n\} \\
& =  \{x\in X
\;|\; \Ext_{\cO_x}^i(\Kos(f_x),\cF_x)\neq 0 \text{ for some } i\leq m+n_x-n  \\
& \qquad \text{and some system of parameters $f_x$ of $\cO_{X,x}$} \} \\
& =  \{x\in X
\;|\; \Ext_{\cO_x}^i(\Kos(f_x),\cF_x)\neq 0 \text{ for some } i\leq m+n_x-n  \\
& \qquad \text{and any system of parameters $f_x$ of
$\cO_{X,x}$}\}
\end{aligned}
\end{equation}

\begin{lem} \label{l:sm}\cite[Lemma 1.10]{HLS05}. If $X$ is smooth, then the $m$-th singularity set of $\cF$
can be described as
$$
 S_m(\cF)=\cup_{p\ge n-m} \{ x\in X \;|\; \Tor_p^{\cO_x}(\bold{k} (x), \cF_x)\neq
0\}\,,
$$
where $\bold{k} (x)$ is the residue field of $\cO_x$.
\end{lem}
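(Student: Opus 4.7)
The plan is to reduce everything to the existence, at each point $x$ of a smooth scheme, of a particularly nice system of parameters: namely, a regular sequence generating the maximal ideal. Once one uses such a system of parameters in the third description of $S_m(\cF)$ in \eqref{e:descrip} and converts Ext to Tor via \eqref{ext-tor}, the identification with $\Tor$-nonvanishing will be immediate after a change of summation index.

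First I would observe that since $X$ is smooth of dimension $n$, each local ring $\cO_x$ is regular of dimension $n_x$, so its maximal ideal $\mathfrak{m}_x$ can be generated by a regular sequence $f_x=\{f_1,\dots,f_{n_x}\}$. This sequence is clearly a system of parameters of $\cO_x$ since $\cO_x/(f_1,\dots,f_{n_x})=\mathbf{k}(x)$ is zero-dimensional. The crucial property is that, because the $f_i$ form a regular sequence, $\Kos(f_x)$ is a free resolution of $\mathbf{k}(x)$. Hence for any stalk $\cF_x$,
\[
\Tor_i^{\cO_x}(\Kos(f_x),\cF_x)
= H^{-i}\bigl(\Kos(f_x)\otimes_{\cO_x}\cF_x\bigr)
\simeq \Tor_i^{\cO_x}(\mathbf{k}(x),\cF_x).
\]

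Next I would apply \eqref{ext-tor} to turn the Ext-description of depth into a Tor-description. Combining the two steps above gives
\[
\Ext_{\cO_x}^{i}(\Kos(f_x),\cF_x)\simeq\Tor_{n_x-i}^{\cO_x}(\Kos(f_x),\cF_x)\simeq\Tor_{n_x-i}^{\cO_x}(\mathbf{k}(x),\cF_x).
\]
Plugging this into the third alternative description of $S_m(\cF)$ in \eqref{e:descrip} (applied to the specific system of parameters $f_x$ just constructed), $x\in S_m(\cF)$ if and only if there exists $i\leq m+n_x-n$ with $\Tor_{n_x-i}^{\cO_x}(\mathbf{k}(x),\cF_x)\neq 0$. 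Setting $p=n_x-i$, the inequality $i\le m+n_x-n$ becomes $p\ge n-m$, which is exactly the desired description.

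The only real point where smoothness enters is the step guaranteeing that $\mathfrak{m}_x$ itself is generated by a system of parameters; without regularity of $\cO_x$, $\Kos(f_x)$ would no longer resolve $\mathbf{k}(x)$ and the identification with ordinary $\Tor^{\cO_x}(\mathbf{k}(x),-)$ would fail. I do not expect any genuine obstacle; the main thing to be careful about is choosing the "for some" version of \eqref{e:descrip}, so that we are free to pick the regular system of parameters that makes the Koszul complex a resolution.
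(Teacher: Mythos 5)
Your proof is correct. Note that the paper itself gives no proof of this lemma — it is quoted directly from \cite[Lemma 1.10]{HLS05} — so there is nothing internal to compare against; but your argument is the natural one given the machinery the paper sets up: smoothness makes each $\cO_x$ regular, so a regular system of parameters generating $\mathfrak{m}_x$ turns $\Kos(f_x)$ into a free resolution of the residue field, and then \eqref{ext-tor} together with the Ext-description of $S_m(\cF)$ in \eqref{e:descrip} gives the statement after the reindexing $p=n_x-i$. The one point worth being precise about (and which your argument implicitly uses correctly) is that the ``for some'' and ``for any'' versions in \eqref{e:descrip} agree because, by the paper's first Lemma, the first index $i$ with $\Ext^i_{\cO_x}(\Kos(f_x),\cF_x)\neq 0$ equals $\depth(\cF_x)$ for \emph{every} system of parameters, so you are indeed free to evaluate the condition on the particular regular sequence you constructed.
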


In the singular case, this characterization of $S_m(\cF)$ is not
true. There is a similar interpretation for Cohen-Macaulay schemes
replacing $\bold{k} (x)$ by $\cO_{Z_x}$ where $Z_x$ is a locally
complete intersection zero cycle supported on $x$ (see \cite[Lemma
3.5]{ HLS??}). Now, for arbitrary schemes, the analogous
interpretation is the following.

\begin{lem}  \label{l:SetSing} The $m$-th singularity set $S_m(\cF)$ can be
described as
\begin{align*}
S_m(\cF) &= \{x\in X \;|\; \text{there is an integer }  i\geq
n-m\text{ with } \Tor_i^{\cO_x}(\Kos(f_x), \cF)\neq 0  \\
& \qquad \text{for any system of parameters $f_x$ of $\cO_{X,x}$}
\}\,.
\end{align*}
\end{lem}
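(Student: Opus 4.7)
The plan is to derive the lemma directly from the fourth description of $S_m(\cF)$ in \eqref{e:descrip} by applying the self-duality of the Koszul complex established earlier, namely the isomorphism \eqref{ext-tor}.

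First I would recall that by the last line of \eqref{e:descrip} we have
\[ x \in S_m(\cF) \iff \Ext_{\cO_x}^i(\Kos(f_x), \cF_x) \neq 0 \text{ for some } i \leq m + n_x - n \text{ and every system of parameters } f_x \text{ of } \cO_{X,x}. \]
Thus the problem reduces to converting the Ext-condition into a Tor-condition with the appropriate reindexing.

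Next I would apply \eqref{ext-tor} pointwise: since $\cO_x$ has dimension $n_x$ and $f_x$ consists of $n_x$ elements, the Koszul complex $\Kos(f_x)$ is self-dual up to the shift $[-n_x]$, giving
\[ \Ext_{\cO_x}^i(\Kos(f_x), \cF_x) \simeq \Tor_{n_x - i}^{\cO_x}(\Kos(f_x), \cF_x). \]
Setting $j = n_x - i$, the inequality $i \leq m + n_x - n$ transforms into $j \geq n - m$. Substituting into the characterization above yields exactly
\[ x \in S_m(\cF) \iff \Tor_j^{\cO_x}(\Kos(f_x), \cF_x) \neq 0 \text{ for some } j \geq n - m \text{ and any system of parameters } f_x, \]
which is the statement of the lemma.

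There is essentially no obstacle here beyond bookkeeping: the work has already been done in the lemma on depth (which yields the Ext characterization of $S_m(\cF)$) and in the self-duality \eqref{ext-tor}. The only thing worth double-checking is that the quantifier \emph{any} on the right-hand side in \eqref{e:descrip} correctly matches the \emph{any} in the statement under the index change $i \leftrightarrow n_x - i$, which it does since the duality isomorphism holds for each fixed system of parameters $f_x$.
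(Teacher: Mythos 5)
Your proposal is correct and is exactly the argument the paper intends: its proof of Lemma \ref{l:SetSing} consists of the single sentence ``It follows from \eqref{ext-tor} and \eqref{e:descrip}'', and you have simply made the reindexing $j=n_x-i$ explicit. Nothing to change.
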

\begin{proof} It follows from \eqref{ext-tor}  and \eqref{e:descrip}.
\end{proof}


\begin{prop}\label{p:Sm} \cite[Prop 1.13]{HLS05}. Let $X$ be an equidimensional scheme of dimension $n$
and $\cF$ a coherent sheaf on $X$. \begin{enumerate} \item
$S_m(\cF)$ is a closed subscheme of $X$ and $\codim S_m(\cF)\geq
n-m$. \item If $Z$ is an irreducible component of the support of
$\cF$ and $c$ is the codimension of $Z$ in $X$, then $\codim
S_{n-c}(\cF)=c$ and $Z$ is also an irreducible component of
$S_{n-c}(\cF)$.
 \end{enumerate}
\end{prop}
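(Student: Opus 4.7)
The plan is to derive both parts from the defining inequality $\codepth(\cF_x)\ge n-m$ combined with the trivial bound $\depth(\cF_x)\ge 0$, relying on the Koszul characterisation of depth given earlier and on the classical semicontinuity of depth. Once part (1) is in hand, part (2) reduces to a bookkeeping exercise with the depth and codimension at generic points.

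For the codimension bound in (1), I would pick a generic point $\xi$ of any irreducible component of $S_m(\cF)$. Since the zero module has infinite depth, $\cF_\xi\neq 0$ and $\depth(\cF_\xi)\ge 0$, which combined with
\[
n_\xi-\depth(\cF_\xi)=\codepth(\cF_\xi)\ge n-m
\]
forces $n_\xi\ge n-m$. Equidimensionality of $X$ identifies $n_\xi$ with the codimension of $\overline{\{\xi\}}$ in $X$, so $\codim S_m(\cF)\ge n-m$.

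For closedness in (1) I would appeal to the (upper) semicontinuity of codepth: fix $x\in X$, take a system of parameters $f_1,\dots,f_{n_x}$ of $\cO_{X,x}$ and extend them to sections on an affine open $U\ni x$, so that $\Kos(\tilde f)$ makes sense as a bounded complex of locally free sheaves on $U$. Using the last description in \eqref{e:descrip}, and stratifying $U$ by the codimension function $y\mapsto n_y$, one aims to identify
\[
S_m(\cF)\cap U=\bigcup_{i\le m+n_x-n}\supp\!\bigl(\SExt^i(\Kos(\tilde f),\cF)\bigr),
\]
which is closed because each $\SExt^i(\Kos(\tilde f),\cF)$ is coherent. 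Patching such local closed sets over a cover of $X$ gives closedness of $S_m(\cF)$.

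Once (1) is established, (2) is straightforward. Let $Z\subset\supp(\cF)$ be an irreducible component of codimension $c$ with generic point $\zeta$. Since $\zeta$ is minimal in $\supp(\cF)$, $\cF_\zeta$ is a nonzero $\cO_{X,\zeta}$-module of finite length, so $\depth(\cF_\zeta)=0$, $\codepth(\cF_\zeta)=n_\zeta=c$, and hence $\zeta\in S_{n-c}(\cF)$. Thus $Z\subseteq S_{n-c}(\cF)$ and $\codim S_{n-c}(\cF)\le c$; combined with (1) this forces $\codim S_{n-c}(\cF)=c$ and makes $Z$ an irreducible component of $S_{n-c}(\cF)$. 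The delicate part of the argument is the closedness in Step 2: the system of parameters $f_x$ is attached only to $x$ and its extension $\tilde f$ need not remain a system of parameters at nearby $y\in U$, whose local rings may have strictly smaller dimension $n_y$. Handling this cleanly requires either the stratification of $U$ by $y\mapsto n_y$ or a direct appeal to the classical semicontinuity of depth, and the equidimensional hypothesis is needed precisely to guarantee, in Step 1, that $n_\xi$ equals the codimension of $\overline{\{\xi\}}$ at the generic points controlling the bound.
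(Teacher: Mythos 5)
The paper gives no argument for this proposition at all: it is quoted verbatim from \cite[Prop.~1.13]{HLS05}, so your attempt has to be judged on its own merits. The parts that work are the codimension bound in (1) and all of (2): at a generic point $\xi$ of a component of $S_m(\cF)$ one has $\cF_\xi\neq 0$, hence $\depth(\cF_\xi)\ge 0$ and $n_\xi\ge\codepth(\cF_\xi)\ge n-m$, and $n_\xi=\dim\cO_{X,\xi}$ is by definition the codimension of $\overline{\{\xi\}}$; likewise, at the generic point $\zeta$ of a component $Z$ of $\supp\cF$ the stalk is a nonzero module of finite length, so $\depth(\cF_\zeta)=0$, $\zeta\in S_{n-c}(\cF)$, and together with (1) this forces $\codim S_{n-c}(\cF)=c$ and makes $Z$ a component. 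These steps are correct.

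The closedness argument, however, has a fatal gap, not the minor technicality you flag at the end. Since $f_x$ is a system of parameters at the closed point $x$, after shrinking $U$ the vanishing locus $V(\tilde f_1,\dots,\tilde f_{n_x})\cap U$ is just $\{x\}$, and $\Kos(\tilde f)$ is acyclic off this locus; hence every sheaf $\SExt^i(\Kos(\tilde f),\cF)$ is supported inside $\{x\}$ on $U$. Your proposed identity $S_m(\cF)\cap U=\bigcup_{i}\supp\bigl(\SExt^i(\Kos(\tilde f),\cF)\bigr)$ would therefore give $S_m(\cF)\cap U\subseteq\{x\}$, which is false whenever $S_m(\cF)$ is positive-dimensional. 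No stratification by $y\mapsto n_y$ repairs this: the description \eqref{e:descrip} is intrinsically pointwise, with a different Koszul complex at each point, and these do not glue into one coherent object whose Ext sheaves detect $S_m(\cF)$. The fallback appeal to ``classical semicontinuity of depth'' is circular, since closedness of all the $S_m(\cF)$ \emph{is} the upper semicontinuity of codepth. The standard argument (the one behind \cite[Prop.~1.13]{HLS05}, in the spirit of Lemma \ref{l:sm}) instead embeds an affine open $U$ into a smooth scheme $W$ of dimension $N$ and uses Auslander--Buchsbaum together with $\depth_{\cO_{W,x}}\cF_x=\depth_{\cO_{X,x}}\cF_x$ to rewrite the condition $\depth(\cF_x)\le m+n_x-n$ as $\SExt^i_{\cO_W}(\cF,\cO_W)_x\neq 0$ for some $i\ge N-m$, so that $S_m(\cF)\cap U$ becomes a finite union of supports of coherent sheaves and is manifestly closed.
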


\begin{cor}\label{lemafer}\cite[Cor. 1.14]{HLS05}. Let $X$ be a  scheme  and let $\cF$ be a
coherent $\cO_X$-module.  Let $h\colon Y\hookrightarrow X$ be an
irreducible component of the support of $\cF$ and $c$ the
codimension of $Y$ in $X$. There is a non-empty open subset $U$ of
$Y$ such that for any $x\in U$ and any system of parameters $f_x$
of $\cO_{X,x}$  one has
\begin{align*}
\Tor_c ^{\cO_x}(\Kos(f_x), \cF_x)&\neq 0
\\
\Tor_{c+i}^{\cO_x}(\Kos(f_x), \cF_x)&= 0\,, \quad \text{ for every
$i>0$.}
\end{align*}
\end{cor}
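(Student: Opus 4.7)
The plan is to read off both the vanishing and the nonvanishing directly from the structure of the singularity sets $S_m(\cF)$ given by Proposition~\ref{p:Sm}, using Lemma~\ref{l:SetSing} to translate those statements into information about $\Tor$ against Koszul complexes. To be able to invoke Proposition~\ref{p:Sm}, I would first restrict $X$, if necessary, to an open neighborhood of the generic point of $Y$ in which $X$ is equidimensional of some dimension $n$; this is harmless since the statement concerns the existence of a single nonempty open $U \subset Y$ and the conditions at each point $x$ depend only on the local data $(\cO_{X,x}, \cF_x)$.

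Once $X$ is equidimensional, Proposition~\ref{p:Sm}(2) says that $Y$ is an irreducible component of $S_{n-c}(\cF)$ of codimension $c$, while Proposition~\ref{p:Sm}(1) gives $\codim_X S_{n-c-1}(\cF) \geq c+1$. In particular $Y \not\subseteq S_{n-c-1}(\cF)$, so
\[
U := Y \setminus S_{n-c-1}(\cF)
\]
is a nonempty open subset of $Y$, and I claim this $U$ works. Indeed, for $x \in U$ we have $x \in Y \subseteq S_{n-c}(\cF)$ but $x \notin S_{n-c-1}(\cF)$. By Lemma~\ref{l:SetSing} (used in both of its equivalent ``for some/for every'' forms), for every system of parameters $f_x$ of $\cO_{X,x}$ we simultaneously obtain $\Tor_i^{\cO_x}(\Kos(f_x), \cF_x) = 0$ for all $i \geq c+1$ and the existence of some $i \geq c$ with $\Tor_i^{\cO_x}(\Kos(f_x), \cF_x) \neq 0$. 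These two assertions together force the nonvanishing to occur in degree exactly $i = c$, which is precisely the content of the corollary.

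The real substance of the argument is already paid for in Proposition~\ref{p:Sm} and Lemma~\ref{l:SetSing}, which do the hard work of tying depth, singularity sets, and Koszul-$\Tor$ together; after that the corollary is essentially a bookkeeping step comparing the codimensions of $S_{n-c}(\cF)$ and $S_{n-c-1}(\cF)$. The only technical point I would be careful about is the initial reduction to an equidimensional open subscheme, so that Proposition~\ref{p:Sm} is strictly applicable with an unambiguous value of $n$.
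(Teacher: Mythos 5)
Your argument is correct and follows essentially the same route as the paper: the paper likewise takes $U=Y\cap(S_{n-c}(\cF)-S_{n-c-1}(\cF))$, invokes Proposition~\ref{p:Sm} for closedness and for the codimension bound $\codim S_{n-c-1}(\cF)\geq c+1$ that gives nonemptiness, and uses Lemma~\ref{l:SetSing} to translate membership in the singularity sets into the stated $\Tor$ conditions. Your explicit reduction to an equidimensional open neighborhood (needed to apply Proposition~\ref{p:Sm}) is a point the paper treats only implicitly by remarking that nonemptiness is a local question.
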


\begin{proof} By Lemma \ref{l:SetSing} the locus of the points that verify the conditions is
$U=Y\cap(S_{n-c}(\cF)-S_{n-c-1}(\cF))$, which is open in $Y$ by
Proposition \ref{p:Sm}. Proving that $U$ is not empty is a local
question, and we can then assume that $Y$ is the support of $\cF$.
Now $Y=S_{n-c}(\cF)$ by (2) of Proposition \ref{p:Sm} and
$U=S_{n-c}(\cF)-S_{n-c-1}(\cF)$ is non-empty because
 the codimension of $S_{n-c-1}(\cF)$ in $X$ is greater or equal than $c+1$ again by
Proposition \ref{p:Sm}.
\end{proof}

For any scheme $X$ we denote by $\catD(X)$ the derived category of
complexes of quasi-coherent $\cO_X$-modules and by $\cdbc{X}$ the
faithful subcategory consisting of those complexes with bounded
and coherent cohomology sheaves.

The following proposition characterises objects of the derived
category supported on a closed subscheme.
\begin{prop} \emph{\cite[Prop.~1.5]{BO95}\cite[Prop. 1.15]{HLS05}}. Let $j\colon Y\hookrightarrow X$ be a closed
immersion
of codimension $d$ of irreducible  schemes and $\cplx{K}$ an
object of $\cdbc X$. Assume that
\begin{enumerate}
\item If $x\in X-Y$ is a closed point, then there exists a system of parameters $f_x$ of $\cO_x$ such that
 $\Tor_i^{\cO_x}(\Kos(f_x), \cplx{K}_x)=0$ for every $i$.
\item If $x\in Y$ is a closed point, then there exists a system of parameters $f_x$ of $\cO_x$ such that
$\Tor_i^{\cO_x}(\Kos(f_x), \cplx{K}_x)=0$  when either $i<0$ or
$i>d$.
\end{enumerate}
Then there is a sheaf $\cK$ on $X$ whose topological support is
contained in $Y$ and such that $\cplx{K}\simeq\cK$ in $\cdbc X$.
Moreover, this topological support coincides with $Y$ unless
$\cplx{K}=0$.
 \label{1:support}
\end{prop}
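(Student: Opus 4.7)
The plan is to prove the proposition in two stages: Stage~1 uses condition~(1) to show $\supp\cplx{K}\subseteq Y$, and Stage~2 uses condition~(2) together with Corollary~\ref{lemafer} and a hyper-Tor spectral-sequence analysis at a carefully chosen closed point to show that $\cplx{K}$ is isomorphic to a sheaf in degree zero with support equal to $Y$.

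For Stage~1, take a closed point $x\in X-Y$ and the system of parameters $f_x$ from condition~(1), so $\Kos(f_x)\lotimes\cplx{K}_x=0$. Let $r=\max\{i:\calH^i(\cplx{K}_x)\neq 0\}$. The distinguished triangle $\tau_{\leq r-1}\cplx{K}_x\to\cplx{K}_x\to\calH^r(\cplx{K}_x)[-r]$, tensored with $\Kos(f_x)$, identifies the top cohomology of $\Kos(f_x)\lotimes\cplx{K}_x$ as $\calH^r(\cplx{K}_x)/(f_x)\calH^r(\cplx{K}_x)$; since $(f_x)$ is $\mathfrak m$-primary, Nakayama forces $\calH^r(\cplx{K}_x)=0$, and iterating yields $\cplx{K}_x=0$. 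Density of closed points then gives $\supp\cplx{K}\subseteq Y$.

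For Stage~2, let $r$ and $s$ be the global top and bottom indices with $\calH^i(\cplx{K})\neq 0$. The Nakayama argument applied at a closed point of $\supp\calH^r\subseteq Y$ under condition~(2) gives $r\leq 0$. To force $s=r=0$, fix an irreducible component $Z$ of $\supp\cplx{K}$ of codimension $c\geq d$; by maximality of $Z$ in $\supp\cplx{K}$, whenever $Z\subseteq\supp\calH^q$ the set $Z$ is also an irreducible component of $\supp\calH^q$ of codimension $c$. Pick a closed point $x\in Z$ in the dense open subset obtained by intersecting the open sets $U_{q,Z}$ from Corollary~\ref{lemafer} (for every $q$ with $Z\subseteq\supp\calH^q$) and removing $(\supp\calH^{q'})\cap Z$ for those $q'$ whose supports fail to contain $Z$. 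Corollary~\ref{lemafer} then gives $H^p(\Kos(f_x)\otimes\calH^q(\cplx{K})_x)=0$ for all $p<-c$ and all $q$. In the hyper-Tor spectral sequence
\[
E_2^{p,q}=H^p(\Kos(f_x)\otimes\calH^q(\cplx{K})_x)\Longrightarrow H^{p+q}(\Kos(f_x)\lotimes\cplx{K}_x),
\]
letting $s_x$ denote the local bottom cohomology degree at $x$, the class $E_2^{-c,s_x}=\Tor_c(\Kos(f_x),\calH^{s_x}(\cplx{K})_x)\neq 0$ has no incoming differentials (which would originate from $p<-c$) and no outgoing differentials (which would land at $q<s_x$), so $E_\infty^{-c,s_x}\neq 0$. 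Condition~(2) therefore forces $s_x-c\geq -d$, i.e.\ $s_x\geq c-d\geq 0$, and combined with $r\leq 0$ this gives $s_x=r=0$ together with the equality $c=d$. Thus every irreducible component of $\supp\cplx{K}$ has codimension $d$ and so equals $Y$ by irreducibility of $Y$, and $\cplx{K}_x$ is a sheaf in degree zero at every such very generic closed point.

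The main obstacle is promoting this local sheaf-ness---known only at very generic closed points of each component of $\supp\cplx{K}$---to a global quasi-isomorphism $\cplx{K}\simeq\calH^0(\cplx{K})$ in $\cdbc{X}$. This I plan to handle by a Noetherian-induction-style argument on the dimension of $\supp\calH^i(\cplx{K})$ for $i\neq 0$: if some $\calH^i(\cplx{K})\neq 0$ with $i\neq 0$, a component of its support is properly contained in $Y$ with codimension strictly greater than $d$, and re-running the spectral-sequence analysis at a very generic closed point of such a component---together with the long exact sequence of the truncation triangle $\tau_{\leq -1}\cplx{K}\to\cplx{K}\to\calH^0(\cplx{K})$---is expected to produce a nonzero class in $H^j(\Kos(f_x)\lotimes\cplx{K}_x)$ for some $j<-d$, contradicting condition~(2).
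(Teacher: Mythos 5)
Your Stage 1 and the bound $r\le 0$ are correct and agree with the paper's argument (the paper phrases the Nakayama step as the survival of the corner class $\Tor_0^{\cO_x}(\Kos(f_x),\calH^{q_0}_x)$ in the hyper-Tor spectral sequence). Your Stage 2 analysis at very generic closed points of a component of $\supp\cplx{K}$ is also sound and does show that every such component has codimension exactly $d$, hence equals $Y$. The genuine gap is that the decisive step --- that $\calH^q(\cplx{K})=0$ for all $q\neq 0$ \emph{globally}, and not merely at your very generic points --- is exactly the part you do not prove: you defer it to a plan (``I plan to handle\dots'', ``is expected to produce\dots''), so as written the argument is incomplete precisely where the proposition has content.

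The missing step is the heart of the paper's proof, and your closing sketch is essentially it, so it should be executed rather than announced. Let $q_2$ be the \emph{global} minimum of the $q$ with $\calH^q\neq 0$ and let $Y'$ be an irreducible component of $\supp\calH^{q_2}$, of codimension $c\ge d$. On the non-empty open subset $U\subseteq Y'$ given by Corollary \ref{lemafer}, the class in $E_2^{-c,q_2}=\Tor_c^{\cO_x}(\Kos(f_x),\calH^{q_2}_x)$ is nonzero; it admits no outgoing differentials because $q_2$ is the global minimum, and the incoming differentials on page $r$ originate in $\Tor_{c+r}^{\cO_x}(\Kos(f_x),\calH^{q_2+r-1}_x)$, whose nonvanishing locus is contained in $S_{n-(c+r)}(\calH^{q_2+r-1})$ and hence has codimension $\ge c+2$ by Lemma \ref{l:SetSing} and Proposition \ref{p:Sm}. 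Removing these finitely many loci from $U$ leaves a closed point at which the class survives to $E_\infty$, so $\Tor_{c-q_2}^{\cO_x}(\Kos(f_x),\cplx{K}_x)\neq 0$ and condition (2) forces $c-q_2\le d$, i.e.\ $q_2\ge c-d\ge 0$; combined with $q_0\le 0$ this gives $q_2=q_0=0$ directly. No truncation triangle or Noetherian induction on supports is needed, and your detour through components of $\supp\cplx{K}$ becomes superfluous: this single argument, run once with $q_2$ the global minimum (and once more for a hypothetical component of codimension $c>d$ of the support of the resulting sheaf), yields both that $\cplx{K}$ is a sheaf and that its support is all of $Y$ when $\cplx{K}\neq 0$.
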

\begin{proof} We just reproduce the proof of \cite[Prop.
1.15]{HLS05}, with the corresponding changes. Let us write
$\calH^q=\calH^q(\cplx K)$. For every system of parameters $f_x$
of $\cO_x$  there is a spectral sequence
$$
E_2^{-p,q}=\Tor_p^{\cO_x} ( \Kos(f_x), \calH^q_x )\implies
E_\infty^{-p+q}=\Tor_{p-q}^{\cO_x}( \Kos(f_x),  \cplx{K}_x)
$$
Let $q_0$ be the maximum of the $q$'s with $\calH^q\neq 0$. If
$x\in \supp(\calH^{q_0})$, one has that $\Tor_0^{\cO_x}(\Kos(f_x),
\calH^{q_0}_x)\simeq H^0(\Kos(f_x))\otimes_{\cO_x} \calH^{q_0}_x
\neq 0$ for every system of parameters $f_x$ of $\cO_x$. A nonzero
element in $\Tor_0^{\cO_x}(\Kos(f_x), \calH^{q_0}_x)$ survives up
to infinity in the spectral sequence. Since there is a system of
parameters $f_x$ of $\cO_x$ such that
$E_\infty^{q}=\Tor_{-q}^{\cO_x}(\Kos(f_x), \cplx{K})=0$ for every
$q>0$ by hypothesis, one has $q_0\le 0$. A similar argument shows
that the topological support of all the sheaves $\calH^q$ is
contained in $Y$: assume that this is not true and let us consider
the maximum $q_1$ of the $q$'s such that $\calH^q_x\neq 0$ for a
certain point $x\in X-Y$; then $\Tor_0^{\cO_x}(\Kos(f_x),
\calH^{q_1}_x)\neq 0$ and a nonzero element in
$\Tor_0^{\cO_x}(\Kos(f_x), \calH^{q_1}_x)$ survives up to infinity
in the spectral sequence, which is impossible since
$\Tor_i^{\cO_x}(\Kos(f_x), \cplx K)=0$ for every $i$.

Let  $q_2\le q_0$ be the minimum of the $q$'s with $\calH^q\neq
0$. We know that $\calH^{q_2}$ is topologically supported on a
closed subset of $Y$. Take a component $Y'\subseteq Y$ of the
support. If $c\ge d$ is the codimension of $Y'$, then there is a
non-empty open subset $U$ of $Y'$ such that
$\Tor_c^{\cO_x}(\Kos(f_x),\calH^{q_2}_x)\neq 0$ for any closed
point $x\in U$ and any system of parameters $f_x$ of $\cO_x$, by
Corollary \ref{lemafer}. Elements in
$\Tor_c^{\cO_x}(\Kos(f_x),\calH^{q_2}_x)$ would be killed in the
spectral sequence by $\Tor_{p}^{\cO_x}(\Kos(f_x),
\calH^{q_2+1}_x)$ with $p\ge c+2$. By Lemma \ref{l:SetSing}  the
set
 $$ \{x\in X \;|\;
\Tor_i^{\cO_x}(\Kos(f_x), \calH^{q_2+1}_x)\neq 0 \text{ for some }
i\geq c+2 \text{ and any parameters $f_x$ of $\cO_x$} \}
$$ is equal to $S_{n-(c+2)}(\calH^{q_2+1})$ and then has
codimension greater or equal than $c+2$ by Proposition \ref{p:Sm}.
Thus there is a point $x\in Y'$ such that any nonzero element in
$\Tor_c^{\cO_x}(\Kos(f_x),\calH^{q_2}_x)$ survives up to the
infinity in the spectral sequence. Therefore,
$\Tor_{c-q_2}^{\cO_x}(\Kos(f_x), \cplx{K}_x)\neq 0$ for any system
of parameters $f_x$ of $\cO_x$. Thus $c-q_2\le d$ which leads to
$q_2\ge c-d\ge 0$ and then $q_2=q_0=0$. So $\cplx{K}= \calH^0$ in
$\dbc X$ and the topological support of $\cK=\calH^0$ is contained
in $Y$. Actually, if $\cplx{K}\neq 0$, then this support is the
whole of $Y$: if this was not true, since $Y$ is irreducible, the
support would have a component $Y'\subset Y$ of codimension $c>d$
and one could find, reasoning as above, a non-empty subset $U$ of
$Y'$ such that $\Tor_c^{\cO_x}(\Kos(f_x),\cplx{K}_x)\neq 0$ for
all $x\in U$ and all system of parameters $f_x$ of $\cO_x$. This
would imply that $c\le d$, which is impossible.
\end{proof}

Assume now that $X$ is separated. Let $x$ be a closed point of $X$
and $\phi_x\colon\Spec\cO_x\to X$ the natural morphism. Let $f_x$
be a system of parameters of $\cO_x$. We shall still denote by
$\Kos(f_x)$ the direct image by $\phi_x$ of the Koszul complex
$\Kos(f_x)$. Let $U$ be an affine open subset containing $x$. Then
$\phi_x$ is the composition of $\phi'_x \colon\Spec\cO_x\to U$
with the open embedding $i_U\colon U\hookrightarrow X$. Since $X$
is separated, $i_U$ is an affine morphism, and then
${\phi_x}_\ast\simeq\bR {\phi_x}_\ast$.

One has
that
\begin{lem} For any $\cplx{K}\in \catD(X)$ one has
\[ \Hom^i_{\catD(X)}(\Kos(f_x),\cplx{K})\simeq
\Ext^i_{\cO_x}(\Kos(f_x),\cplx{K}_x)\]
\end{lem}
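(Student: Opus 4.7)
The plan is to construct the natural comparison map via adjunction and then show it is an isomorphism by reducing to a purely local computation at $x$. Since $\phi_x$ is flat (it is a localisation) and affine (using separatedness of $X$, as already noted), one has $\phi_x^\ast = \bL\phi_x^\ast$, so that $\phi_x^\ast\cplx K = \cplx K_x$, and the unit $\cM \to \phi_x^\ast\phi_{x\ast}\cM$ is an isomorphism for every $\cO_x$-complex $\cM$. Pulling back morphisms along $\phi_x^\ast$ therefore yields a natural map
\[
\Hom^i_{\catD(X)}(\phi_{x\ast}\Kos(f_x),\cplx K) \longrightarrow \Ext^i_{\cO_x}(\Kos(f_x),\cplx K_x),
\]
whose bijectivity is the content of the lemma.

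I would then use the cohomology spectral sequence $E_2^{p,q}=\Ext^p(\calH^{-q},-)\Rightarrow \Ext^{p+q}(\Kos(f_x),-)$ on both sides. By naturality the comparison fits into a morphism of spectral sequences, so it suffices to prove the isomorphism at the $E_2$-page: for every $\cO_x$-module $N$ annihilated by a power of $\mathfrak m_x$ (each cohomology $H^{-q}(\Kos(f_x))$ qualifies, being of finite length),
\[
\Ext^p_{\cO_X}(\phi_{x\ast}N,\cplx K) \simeq \Ext^p_{\cO_x}(N,\cplx K_x).
\]

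This is purely local at $x$. In an affine neighbourhood $V=\Spec A$ of $x$, writing $\mathfrak p\subset A$ for the prime of $x$ and $\cplx M$ for the complex of $A$-modules representing $\cplx K|_V$, the required identity becomes $\Ext^p_A(N,\cplx M) \simeq \Ext^p_{A_{\mathfrak p}}(N,\cplx M_{\mathfrak p})$. This follows from two standard facts: localisation at $\mathfrak p$ sends injective $A$-modules to injective $A_\mathfrak p$-modules, and for any $\mathfrak p$-power torsion $N$ and any injective $A$-module $I$ the natural map $\Hom_A(N,I) \to \Hom_{A_\mathfrak p}(N,I_\mathfrak p)$ is an isomorphism, because any $A$-linear map out of $N$ factors through $I[\mathfrak p^\infty]=I_\mathfrak p[\mathfrak p^\infty]$, on which $A$- and $A_\mathfrak p$-linearity coincide. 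Feeding an injective resolution of $\cplx M$ into this identification gives the comparison in all degrees.

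The main subtlety is that $\phi_{x\ast}\Kos(f_x)$ is a complex of non-coherent quasi-coherent sheaves and the termwise version of the lemma fails: for example, $\Ext^p_{\cO_X}(\phi_{x\ast}\cO_x,\cplx K)$ need not coincide with $H^p(\cplx K_x)$. The isomorphism only materialises after one passes to the cohomology of the Koszul complex, where the $\mathfrak m_x$-torsion structure appears, so the principal bookkeeping task is to verify that the natural map of the first paragraph is genuinely compatible with the two truncation spectral sequences.
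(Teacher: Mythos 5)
Your argument is correct, but it is not the route the paper takes. The paper's proof is a two-line support argument: it forms the cone $C$ of the unit map $\cplx{K}\to {\phi_x}_\ast\phi_x^\ast\cplx{K}$, observes that $C$ has zero stalk at $x$ while every cohomology sheaf of ${\phi_x}_\ast\Kos(f_x)$ is a skyscraper at $x$, concludes $\Hom^i({\phi_x}_\ast\Kos(f_x),C)=0$, and then finishes by adjunction together with $\phi_x^\ast{\phi_x}_\ast\Kos(f_x)\simeq\Kos(f_x)$. In other words, the paper localises the \emph{second} variable and never touches the internal structure of the Koszul complex, whereas you localise the \emph{first} variable: you d\'evissage $\Kos(f_x)$ into its finite-length cohomology modules via the truncation spectral sequence and then prove the localisation isomorphism $\Ext^p_A(N,\cplx{M})\simeq\Ext^p_{A_{\mathfrak p}}(N,\cplx{M}_{\mathfrak p})$ for $\mathfrak p$-power-torsion $N$ by hand, using that localisation preserves injectives and that maps out of $N$ factor through the $\mathfrak p$-torsion part. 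Both proofs ultimately rest on the same two facts (that $\phi_x$ is flat and affine, and that the relevant objects are torsion at the closed point $x$), and both are equally informal about the K-injectivity issues that arise for genuinely unbounded $\cplx{K}$, so neither has an advantage in rigour. The paper's version is shorter and cleaner; yours is more self-contained on the commutative-algebra side and has the merit of making explicit why a termwise argument on the Koszul complex fails (the free terms push forward to non-skyscraper sheaves), a point the paper's cone trick silently circumvents. Two small slips worth fixing: the map $\cM\to\phi_x^\ast{\phi_x}_\ast\cM$ you invoke is the counit (in the direction $\phi_x^\ast{\phi_x}_\ast\cM\to\cM$) of the adjunction, not the unit; and your torsion argument for $\Hom_A(N,I)\simeq\Hom_{A_{\mathfrak p}}(N,I_{\mathfrak p})$ uses that $\mathfrak p$ is maximal (so that $\Gamma_{\mathfrak p}(I)$ is already an $A_{\mathfrak p}$-module), which holds here since $x$ is a closed point but should be said.
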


\begin{proof} Let $C$ be the cone of $\cplx{K}\to
{\phi_x}_\ast \phi_x^*\cplx{K}$. It is clear that $x\notin \supp
(C)$. On the other hand ${\phi_x}_*\Kos(f_x)$ is supported at $x$.
Then $\Hom^i({\phi_x}_*\Kos(f_x), C)=0$ and
\[ \Hom^i_{\catD(X)}({\phi_x}_* \Kos(f_x),\cplx{K}) \simeq \Hom^i_{\catD(X)}({\phi_x}_*
\Kos(f_x),{\phi_x}_\ast \phi_x^*\cplx{K})\] and one concludes
because $\phi_x^\ast {\phi_x}_* \Kos(f_x) \simeq \Kos(f_x)$.
\end{proof}

Taking into account the equation \eqref{ext-tor}, Proposition
\ref{1:support} may be reformulated as follows:
\begin{prop}  \label{1:support2} Let $j\colon Y\hookrightarrow X$ be a closed immersion
of codimension $d$ of irreducible  schemes of dimensions $m$ and
$n$ respectively, and let $\cplx{K}$ be an object of $\cdbc X$.
Assume that for any closed point $x\in X$ there is a system of
parameters $f_x$ of $\cO_x$ such that
$$
 \Hom^i_{\catD(X)}(\Kos(f_x),\cplx{K})= 0\,,
$$
unless $x\in Y$ and $m\le i\le n$. Then there is a sheaf $\cK$ on
$X$ whose topological support is contained in $Y$ and such that
$\cplx{K}\simeq\cK$ in $\cdbc X$. Moreover, the topological
support is $Y$ unless $\cplx{K}=0$. \qed\end{prop}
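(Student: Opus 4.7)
The strategy is simply to translate the hypothesis of Proposition~\ref{1:support2} into that of Proposition~\ref{1:support}. First I would apply the preceding lemma, which identifies $\Hom^i_{\catD(X)}(\Kos(f_x),\cplx{K})$ with $\Ext^i_{\cO_x}(\Kos(f_x),\cplx{K}_x)$ at every closed point $x\in X$. Then I would invoke the Koszul self-duality \eqref{ext-tor} to rewrite
\[
\Ext^i_{\cO_x}(\Kos(f_x),\cplx{K}_x)\simeq \Tor^{\cO_x}_{n_x-i}(\Kos(f_x),\cplx{K}_x),
\]
where $n_x=\dim\cO_{X,x}$. Since $X$ is irreducible of dimension $n$, every closed point $x\in X$ satisfies $n_x=n$, so the shift is uniform.

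With this dictionary in hand I would check that the two hypotheses of Proposition~\ref{1:support} are fulfilled. For a closed point $x\in X\setminus Y$, the assumption of \ref{1:support2} forces $\Hom^i_{\catD(X)}(\Kos(f_x),\cplx{K})=0$ for every $i$, hence $\Tor^{\cO_x}_j(\Kos(f_x),\cplx{K}_x)=0$ for every $j$: this is condition~(1) of Proposition~\ref{1:support}. For a closed point $x\in Y$, the assumption gives $\Ext^i_{\cO_x}(\Kos(f_x),\cplx{K}_x)=0$ whenever $i<m$ or $i>n$; setting $j=n-i$ and using \eqref{ext-tor} this translates into $\Tor^{\cO_x}_j(\Kos(f_x),\cplx{K}_x)=0$ whenever $j<0$ or $j>n-m=d$, which is exactly condition~(2) of Proposition~\ref{1:support}.

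Proposition~\ref{1:support} then delivers the desired conclusion: there exists a sheaf $\cK$ on $X$ whose topological support is contained in $Y$ and such that $\cplx{K}\simeq\cK$ in $\cdbc X$, with the support equal to $Y$ unless $\cplx{K}=0$. There is no real obstacle beyond the bookkeeping: the substitution $j=n-i$ sends the interval $[m,n]$ to $[0,n-m]=[0,d]$, so the range of permitted indices in \ref{1:support2} is tuned precisely to the codimension appearing in \ref{1:support}.
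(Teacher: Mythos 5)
Your proof is correct and is exactly the argument the paper intends: Proposition \ref{1:support2} is stated with an immediate \qed precisely because it is the reformulation of Proposition \ref{1:support} obtained by combining the preceding lemma ($\Hom^i_{\catD(X)}(\Kos(f_x),\cplx{K})\simeq\Ext^i_{\cO_x}(\Kos(f_x),\cplx{K}_x)$) with the duality \eqref{ext-tor}, and your index bookkeeping $j=n-i$ sending $[m,n]$ to $[0,d]$ matches what is needed.
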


\subsubsection{Spanning classes}

\begin{lem} \label{l:spanning} For each closed point $x\in X$ choose a system of parameters $f_x$ of $\cO_x$.
The set
$$\Omega=\{ \Kos(f_x) \text{ for all closed points } x\in X \}$$ is a spanning class
for $\cdbc{X}$.
\end{lem}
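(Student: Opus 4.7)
To verify that $\Omega$ is a spanning class, I must establish two conditions for arbitrary $\cplx K\in\cdbc X$: (a) if $\Hom^i_{\catD(X)}(\Kos(f_x),\cplx K)=0$ for every closed $x$ and every $i$, then $\cplx K=0$; and (b) if $\Hom^i_{\catD(X)}(\cplx K,\Kos(f_x))=0$ for every closed $x$ and every $i$, then $\cplx K=0$.

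\emph{For (a),} the plan is to translate the hypothesis to the vanishing $\Tor_i^{\cO_x}(\Kos(f_x),\cplx K_x)=0$ for every closed $x$ and every $i$ (via the lemma just proved and \eqref{ext-tor}), and then rerun the spectral-sequence argument from the proof of Proposition \ref{1:support}. Assuming $\cplx K\ne 0$, set $q_0=\max\{q:\calH^q(\cplx K)\ne 0\}$ and pick a closed $x\in\supp\calH^{q_0}$; in the spectral sequence $E_2^{-p,q}=\Tor_p^{\cO_x}(\Kos(f_x),\calH^q_x)\Rightarrow\Tor_{p-q}^{\cO_x}(\Kos(f_x),\cplx K_x)$ the corner term $E_2^{0,q_0}=\calH^{q_0}_x/f_x\calH^{q_0}_x$ is non-zero by Nakayama, outgoing differentials vanish for bidegree reasons, and incoming differentials vanish by maximality of $q_0$. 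Hence $E_\infty^{0,q_0}\ne 0$ and $\Tor_{-q_0}^{\cO_x}(\Kos(f_x),\cplx K_x)\ne 0$, contradicting the hypothesis.

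\emph{For (b),} the plan is to reduce to (a) via the self-duality $\Hom^{\scriptscriptstyle\bullet}(\Kos(f_x),\cO_x)\simeq\Kos(f_x)[-n_x]$ recorded in Section~1. The adjunction $\phi_x^\ast\dashv\phi_{x\ast}$ gives $\Hom^i_{\catD(X)}(\cplx K,\Kos(f_x))\simeq\Hom^i_{\cO_x}(\cplx K_x,\Kos(f_x))$, and tensor--hom adjunction together with the self-duality yields
\[\bR\Hom_{\cO_x}(\cplx K_x,\Kos(f_x))\simeq\bR\Hom_{\cO_x}\bigl(\cplx K_x\otimes^{\bL}_{\cO_x}\Kos(f_x),\,\cO_x\bigr)[n_x],\]
so the hypothesis of (b) becomes $\bR\Hom_{\cO_x}(B,\cO_x)=0$ for $B:=\cplx K_x\otimes^{\bL}\Kos(f_x)$, a homologically bounded complex with coherent cohomology. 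A minimal-free-resolution argument (for $F\to B$ minimal, the differentials of $F^\vee=\Hom(F,\cO_x)$ land in $\mathfrak m_xF^\vee$, so $F^\vee\otimes k(x)$ has zero differential and is non-zero whenever $F\ne 0$, and the Tor spectral sequence then detects some $\Ext^i_{\cO_x}(B,\cO_x)\ne 0$) forces $B=0$. Hence $\cplx K_x\otimes^{\bL}\Kos(f_x)=0$ at every closed $x$, and reapplying the spectral-sequence argument of (a) to $\cplx K_x$ itself (the corner $\calH^{q_0}_x/f_x\calH^{q_0}_x$ would survive to $E_\infty$ if $\cplx K_x\ne 0$) forces $\cplx K_x=0$ at every closed $x$, so $\cplx K=0$.

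The main obstacle is the minimal-free-resolution step of (b): when $\cO_x$ has infinite global dimension the resolution $F$ is unbounded below, and the spectral sequence relating $\bR\Hom(B,\cO_x)$ to the Betti numbers of $F$ requires some care. The substantive content is the conservativity of $\bR\Hom_{\cO_x}(-,\cO_x)$ on bounded complexes with coherent cohomology over an arbitrary noetherian local ring.
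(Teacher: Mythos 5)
Your proof is correct in substance, and your part (a) is essentially the paper's: the paper simply invokes Proposition \ref{1:support2} with $Y=\emptyset$, which packages exactly the spectral-sequence argument you rerun. For part (b), however, you take a genuinely different and much heavier route. The paper disposes of the condition $\Hom^i_{\catD(X)}(\cplx E,\Kos(f_x))=0$ in two lines: with $q_0$ the top degree of nonvanishing cohomology of $\cplx E$, $x$ a closed point of $\supp\calH^{q_0}(\cplx E)$ and $-l$ the bottom degree of nonvanishing cohomology of $\Kos(f_x)$, the extreme-degree computation gives
\[
\Hom^{-l-q_0}_{\catD(X)}(\cplx E,\Kos(f_x))\simeq\Hom_{\cO_x}\bigl(\calH^{q_0}(\cplx E)_x,H^{-l}(\Kos(f_x))\bigr),
\]
which is nonzero because a nonzero finite $\cO_x$-module surjects onto $\bold{k}(x)$ and a nonzero finite module supported at $x$ has nonzero socle. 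You instead use the self-duality $\Hom^{\scriptscriptstyle\bullet}(\Kos(f_x),\cO_x)\simeq\Kos(f_x)[-n_x]$ to convert the hypothesis into $\bR\Hom_{\cO_x}(\cplx K_x\otimes^{\bL}\Kos(f_x),\cO_x)=0$, and then need conservativity of $\bR\Hom_{\cO_x}(-,\cO_x)$ on bounded coherent complexes --- precisely the step you flag. That step is true and your sketch can be completed (a bounded-below complex of free modules is K-flat, so if $F^\vee$ is acyclic then so is $F^\vee\otimes\bold{k}(x)$, which has zero differentials and is nonzero unless $F=0$, whence $B=0$), but over a non-regular local ring it drags you through unbounded resolutions and a convergence argument that the paper never touches. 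What your approach buys is a statement of independent interest (conservativity of the $\cO_x$-dual on $\cdbc{\Spec\cO_x}$); what the paper's buys is an entirely elementary argument that only ever looks at the two extreme cohomology sheaves. If you keep your route, spell out the K-flatness point explicitly, since as written it is the one place a reader could reasonably object.
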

\begin{proof} Take a non-zero object $\cplx E$ in $\cdbc{X}$.
Let $q_0$ be the maximum of  the $q$'s such that $\calH^{q}(\cplx
E)\neq 0$, $x$  a closed point of the support of $\calH^{q}(\cplx
E)$ and $-l$ the minimum of the $p$'s such that
$H^p(\Kos(f_x))\neq 0$. Then
$$\aligned \Hom^{-l-q_0}_{\catD (X)}(\cplx E,\Kos(f_x))& \simeq
\Hom_{\cO_X}(H^{q_0}(\cplx{E}),H^{-l}(\Kos(f_x))\\ &\simeq
\Hom_{\cO_x}(H^{q_0}(\cplx{E})_x,H^{-l}(\Kos(f_x)) \neq
0.\endaligned$$

On the other hand, by Proposition \ref{1:support2} with
$Y=\emptyset$, if $\Hom^{i}_{\catD (X)}(\Kos(f_x),\cplx E)= 0$ for
every $i$ and every $x$, then $\cplx{E}=0$.
\end{proof}

\section{Fully faithful Integral functors}\label{ss:basechange}

In this section scheme means a separated scheme of finite type
over an algebraically closed field $k$.

Let $X$ and $Y$ be proper schemes, $\cplx{K}$ an object in
$\cdbc{X\times Y}$ and
\[ \fmf{\cplx{K}}{X}{Y}\colon\catD(X)\to\catD(Y)\] the integral functor associated to
$\cplx{K}$. If $X$ is projective and $\cplx{K}$ has finite
homological dimension over both $X$ and $Y$, then
$\fmf{\cplx{K}}{X}{Y}$ maps $\cdbc{X}$ to $\cdbc{Y}$ and it has an
integral right adjoint (see \cite[Def. 2.1]{HLS??}, \cite[Prop.
2.7]{HLS??} and \cite[Prop. 2.9]{HLS??}).

The notion of strong simplicity is the following.

\begin{defn} An object  $\cplx{K}$ in $\cdbc{X\times Y}$ is \emph{strongly simple} over $X$ if it satisfies the
following conditions:
\begin{enumerate}
\item For every  closed point $x\in X$ there is a system of parameters $f_x$ of $\cO_x$  such that
$$
\Hom^i_{\catD(Y)}(\fmf{\cplx{K}}{X}{Y}(\Kos(f_{x_1}),\fmf{\cplx{K}}{X}{Y}(\bold{k}(x_2)))=0
$$
unless $x_1= x_2$ and $0\leq i\leq \dim X$.
\item$\Hom^0_{\catD(Y)}(\fmf{\cplx{K}}{X}{Y}(\bold{k}(x)),
\fmf{\cplx{K}}{X}{Y}(\bold{k}(x)))= k$ for every closed point
$x\in X$.
\end{enumerate}
\label{1:strgsplcplxCM}
\end{defn}

\begin{thm}\label{1:ffcritCM}  Let $X$ and $Y$ be proper schemes over
an algebraically closed field of characteristic zero, and let
$\cplx{K}$ be an object in $\cdbc{X\times Y}$ of finite
homological dimension over both $X$ and $Y$. Assume also that $X$
is projective and integral. Then the functor
$\fmf{\cplx{K}}{X}{Y}\colon \cdbc{X}\to \cdbc{Y}$ is fully
faithful if and only if the kernel $\cplx{K}$ is strongly simple
over $X$.
\end{thm}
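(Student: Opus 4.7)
The necessity ($\Rightarrow$) is a direct computation. Assuming $\Phi$ is fully faithful, $\Hom^i(\Kos(f_{x_1}),\bold{k}(x_2))\simeq\Hom^i(\Phi\Kos(f_{x_1}),\Phi\bold{k}(x_2))$; the left side vanishes for $x_1\neq x_2$ by disjoint supports. For $x_1=x_2=x$, the self-duality $\Hom^{\scriptscriptstyle\bullet}(\Kos(f_x),\cO_x)\simeq\Kos(f_x)[-n]$ together with the vanishing of the differentials in $\Kos(f_x)\otimes\bold{k}(x)$ yields $\Ext^i_{\cO_x}(\Kos(f_x),\bold{k}(x))\simeq\Lambda^{n-i}\bold{k}(x)^n$, nonzero precisely for $0\le i\le n=\dim X$, so condition (1) holds; condition (2) reduces to $\End(\bold{k}(x))=k$.

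For sufficiency ($\Leftarrow$), I will adapt the Cohen--Macaulay argument of \cite{HLS??}, with the Koszul complex $\Kos(f_x)$ of a system of parameters playing the role of the structure sheaf of a locally complete intersection zero-cycle there. Let $H\colon\cdbc{Y}\to\cdbc{X}$ be the integral right adjoint of $\Phi$, which exists under the stated finiteness hypotheses. The plan is to apply Bridgeland's fully-faithfulness criterion to the spanning class $\Omega=\{\Kos(f_x)\}$ of Lemma \ref{l:spanning}; this will follow once I show that the adjunction unit $\eta_F\colon F\to H\Phi F$ is an isomorphism for every $F\in\Omega$.

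First, for each closed point $x\in X$, set $\cplx{G}_x:=H\Phi\bold{k}(x)$. Adjunction and condition (1) of strong simplicity give
\[
\Hom^i_{\catD(X)}(\Kos(f_{x_1}),\cplx{G}_{x_2})\simeq\Hom^i_{\catD(Y)}(\Phi\Kos(f_{x_1}),\Phi\bold{k}(x_2))\,,
\]
which vanishes unless $x_1=x_2$ and $0\le i\le n$. Proposition \ref{1:support2} applied to $\cplx{G}_{x_2}$ with the closed immersion $\{x_2\}\hookrightarrow X$ (so $m=0$ and $d=n$) then identifies $\cplx{G}_{x_2}$ with a coherent sheaf $\cG_{x_2}$ topologically supported at $\{x_2\}$. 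Condition (2) together with adjunction now gives $\Hom(\bold{k}(x),\cG_x)=\End(\Phi\bold{k}(x))=k$, so $\cG_x$ has simple socle $\bold{k}(x)$ and the unit $\eta_{\bold{k}(x)}$, corresponding to $\Id_{\Phi\bold{k}(x)}$, is an injective map $\bold{k}(x)\hookrightarrow\cG_x$.

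The main obstacle is to prove that $\eta_{\bold{k}(x)}$ is in fact an isomorphism, i.e.\ that $\cG_x$ has length one; this is the only place where the characteristic-zero hypothesis is used, via a Bondal--Orlov-type rigidity argument modelled on \cite{BO95} and the CM analogue in \cite{HLS??}. The triangle identity of the adjunction splits the image $\Phi\cG_x\simeq\Phi\bold{k}(x)\oplus\Phi(\cG_x/\bold{k}(x))$ of the short exact sequence $0\to\bold{k}(x)\to\cG_x\to\cG_x/\bold{k}(x)\to 0$, and characteristic zero forces $\cG_x/\bold{k}(x)=0$. Once $\eta_{\bold{k}(x)}$ is an isomorphism for every $x$, the full subcategory of $\cdbc{X}$ on which $\eta$ is an isomorphism, being triangulated and closed under direct summands, contains every finite length sheaf supported at $x$ (via the socle filtration), hence the cohomology sheaves of $\Kos(f_x)$, and therefore $\Kos(f_x)$ itself through its truncation tower. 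Bridgeland's criterion then concludes.
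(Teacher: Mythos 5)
Your necessity direction is fine, and so is the skeleton of the sufficiency argument: applying Proposition \ref{1:support2} to $\cplx{G}_{x}=H\Phi(\bold{k}(x))$ with $Y=\{x\}$ is a legitimate pointwise variant of what the paper does, and once one knows that the unit $\eta_{\bold{k}(x)}\colon \bold{k}(x)\to\cG_x$ is an isomorphism for every closed point, your d\'evissage through finite-length sheaves, the truncations of $\Kos(f_x)$, and Bridgeland's criterion on the spanning class of Lemma \ref{l:spanning} does close the argument.

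The gap is the sentence ``characteristic zero forces $\cG_x/\bold{k}(x)=0$''. That is not a step, it is the theorem. The splitting $\Phi\cG_x\simeq\Phi\bold{k}(x)\oplus\Phi(\cG_x/\bold{k}(x))$ is correct but yields nothing: pushed back through adjunction it only says $\End(\cG_x)\simeq k\oplus\Hom(\cG_x/\bold{k}(x),\cG_x)$, and a finite-length module with one-dimensional socle can perfectly well have $\length>1$ and extra endomorphisms, so no contradiction arises from $\cG_x/\bold{k}(x)\neq 0$. The two known ways to prove $\length(\cG_x)=1$ are: (a) the Bondal--Orlov computation that $\Ext^{\scriptscriptstyle\bullet}(\bold{k}(x),\bold{k}(x))\to\Ext^{\scriptscriptstyle\bullet}(\Phi\bold{k}(x),\Phi\bold{k}(x))$ is an isomorphism of graded algebras, which uses that the source is an exterior algebra generated in degree one and therefore requires $X$ smooth at $x$ --- not an available hypothesis here; and (b) the route the paper actually takes (it defers to \cite{HLS??}, and the mechanism is visible in the proof of Theorem \ref{1:ffcritCMp}): one applies the support proposition not to each $\cG_x$ but to the kernel $\cM$ of $H\circ\Phi$ on $X\times X$, proving that $\cM$ is a sheaf supported on the diagonal with $\pi_{1\ast}\cM$ locally free, so that $\length(\cG_x)$ is a constant $r$ on the connected scheme $X$; the characteristic-zero and integrality hypotheses are then used to deduce $r=1$ from the simplicity condition (2) by working at a general point. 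Your purely pointwise setup never establishes the local freeness of $\pi_{1\ast}\cM$, hence neither the constancy of $\length(\cG_x)$ nor the possibility of arguing at the generic point of the diagonal, so the decisive step cannot be carried out as written. To repair the proof, switch to the paper's decomposition and apply Proposition \ref{1:support2} (in its form on $X\times X$, with $Y=\Delta$) to $\cM$ itself.
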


\begin{proof} The same proof as \cite[Thm. 3.6]{HLS??} works,
replacing  the use of Proposition 3.1 of \cite{HLS??} by its
analogous result (Proposition \ref{1:support2}).
\end{proof}

\begin{defn} \label{d:ortho}
An object $\cplx K$ of $\cdbc{X\times Y}$ satisfies the
 orthonormality conditions over $X$ if it has the
following properties:
\begin{enumerate}
\item For every  closed point $x\in X$ there is a system of parameters $f_x$ of $\cO_x$  such that
$$
\Hom^i_{\catD(Y)}(\fmf{\cplx{K}}{X}{Y}(\Kos(f_{x_1}),\fmf{\cplx{K}}{X}{Y}(\bold{k}(x_2)))=0
$$
unless $x_1= x_2$ and $0\leq i\leq \dim X$.
\item There exists a closed point $x$ such that at least one of the following conditions is fulfilled:
\begin{enumerate} \item[(2.1)] $\Hom^0_{\catD(Y)}(\fmf{\cplx{K}}{X}{Y}(\cO_X),
\fmf{\cplx{K}}{X}{Y}(\bold{k}(x)))\simeq k$.
\item[(2.2)] $\Hom^0_{\catD(Y)}(\fmf{\cplx{K}}{X}{Y}(\Kos(f_x)),
\fmf{\cplx{K}}{X}{Y}(\bold{k}(x)))\simeq k$ for any system of
parameters $f_x$ of $\cO_x$.
\item[(2.2$^*$)] $\Hom^0_{\catD(Y)}(\fmf{\cplx{K}}{X}{Y}(\cO_x/f_x),
\fmf{\cplx{K}}{X}{Y}(\bold{k}(x)))\simeq k$ for any system of
parameters $f_x$ of $\cO_x$.
\item[(2.3)] $1\leq \dim  \Hom^0_{\catD(Y)}(\fmf{\cplx{K}}{X}{Y}(\Kos(f_x)),
\fmf{\cplx{K}}{X}{Y}(\cO_x/f_x))\leq l(\cO_x/f_x)$ for any system
of parameters $f_x$ of $\cO_x$, where $l(\cO_x/f_x)$ is the length
of $\cO_x/f_x$.
\item[(2.3$^*$)] $1\leq \dim  \Hom^0_{\catD(Y)}(\fmf{\cplx{K}}{X}{Y}(\cO_x/f_x),
\fmf{\cplx{K}}{X}{Y}(\cO_x/f_x))\leq l(\cO_x/f_x)$ for any system
of parameters $f_x$ of $\cO_x$.
\end{enumerate}
\end{enumerate}
\end{defn}

\begin{thm}\label{1:ffcritCMp}  Let $X$ and $Y$ be proper schemes over
an algebraically closed field of arbitrary characteristic, and let
$\cplx{K}$ be an object in $\cdbc{X\times Y}$ of finite
homological dimension over both $X$ and $Y$. Assume also that $X$
is projective, Cohen-Macaulay, equidimensional and connected. Then
the functor $\fmf{\cplx{K}}{X}{Y}\colon \cdbc{X}\to \cdbc{Y}$ is
fully faithful if and only if the kernel $\cplx{K}$ satisfy the
orthonormality conditions over $X$ (Definition \ref{d:ortho}).
\end{thm}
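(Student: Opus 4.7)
The plan is to follow the scheme of \cite[Thm.~3.6]{HLS??}, replacing their support characterization by our Proposition~\ref{1:support2}. The key observation enabling the adaptation is that since $X$ is Cohen--Macaulay and equidimensional of dimension $n$, any system of parameters $f_x=\{f_{x,1},\dots,f_{x,n}\}$ at a closed point $x$ is a regular sequence in $\cO_x$. Hence $\Kos(f_x)$ is a resolution of $\cO_x/f_x$, so $\Kos(f_x)\simeq \cO_x/f_x$ in $\cdbc{X}$, and under this identification $\cO_x/f_x$ is the structure sheaf of a locally complete intersection zero-cycle supported at $x$ of length $l(\cO_x/f_x)$. In particular, conditions (2.2)/(2.2${}^*$) and (2.3)/(2.3${}^*$) of Definition~\ref{d:ortho} are literally equivalent, and they coincide with the orthonormality conditions of \cite{HLS??}. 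The necessity direction is immediate: if $\fmf{\cplx{K}}{X}{Y}$ is fully faithful, condition~(1) reduces by adjunction to the vanishing of $\Hom^i_{\catD(X)}(\Kos(f_{x_1}),k(x_2))$ outside the allowed range (disjoint supports when $x_1\neq x_2$; depth and dimension considerations for the degrees), and condition~(2.1) reduces to $\Hom^0_{\catD(X)}(\cO_X,k(x))\simeq k$, which is clear.

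For sufficiency, I would work with the composition $\Phi^R\circ\Phi$, where $\Phi=\fmf{\cplx{K}}{X}{Y}$ and $\Phi^R$ denotes its right adjoint (an integral functor by the finite homological dimension hypothesis). Let $\cplx{H}\in\cdbc{X\times X}$ be its kernel. Full faithfulness of $\Phi$ is equivalent to the unit $\cO_\Delta\to \cplx{H}$ being an isomorphism, where $\Delta\subset X\times X$ is the diagonal. The strategy splits into two steps: first use condition~(1) to show that $\cplx{H}$ is concentrated in degree $0$ as a sheaf supported on $\Delta$, and then use condition~(2) to identify this sheaf with $\cO_\Delta$. For the first step, condition~(1) combined with adjunction gives, for every pair of closed points $x_1,x_2\in X$ and the chosen $f_{x_1}$,
\[ \Hom^i_{\catD(X)}\bigl(\Phi^R\Phi(\Kos(f_{x_1})),k(x_2)\bigr)=0 \]
outside $x_1=x_2$ and $0\le i\le n$. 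A spanning-class argument via Lemma~\ref{l:spanning}, together with Proposition~\ref{1:support2} applied fibrewise along the projections of $X\times X$, then forces $\cplx{H}$ to be a sheaf whose topological support is contained in $\Delta$.

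To identify this sheaf with $\cO_\Delta$, condition~(2) at a single closed point $x_0$ pins down its stalk at $(x_0,x_0)$: each of (2.1), (2.2), (2.2${}^*$) yields a one-dimensional $\Hom^0$ space that produces a surjection $\cO_\Delta\twoheadrightarrow \cplx{H}$ at that stalk, while (2.3) and (2.3${}^*$) provide the upper length bound $l(\cO_{x_0}/f_{x_0})$ needed for the reverse inequality. The connectedness of $X$, together with semicontinuity of fibre dimensions for the integral kernel, then propagates this one-point isomorphism to all of $\Delta$, yielding $\cplx{H}\simeq\cO_\Delta$ globally.

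The main technical obstacle I anticipate is exactly this final propagation step: verifying that a length-one stalk identification at a single point upgrades to a global isomorphism of sheaves on $\Delta$. This is precisely where the hypotheses that $X$ is Cohen--Macaulay, equidimensional, and connected are used, in contrast to Theorem~\ref{1:ffcritCM} where the strong-simplicity condition~(2) of Definition~\ref{1:strgsplcplxCM} is imposed at every closed point. Checking that the semicontinuity/Nakayama argument is robust across the four alternative formulations (2.1)--(2.3${}^*$) is where the bulk of the work will live.
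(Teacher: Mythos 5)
Your overall strategy coincides with the paper's: both reduce to the kernel $\cM$ of the composition of $\Phi=\fmf{\cplx{K}}{X}{Y}$ with its right adjoint, use condition (1) together with Proposition \ref{1:support2} to show that $\cM$ is a sheaf supported on the diagonal with $\pi_{1\ast}\cM$ locally free, and then use connectedness to reduce the identification of $\cM$ with the structure sheaf of the diagonal to a computation at a single closed point. Your observation that in the Cohen--Macaulay equidimensional case $\Kos(f_x)$ is a resolution of $\cO_x/f_x$, so that (2.2)$\Leftrightarrow$(2.2$^*$) and (2.3)$\Leftrightarrow$(2.3$^*$) are immediate, is a clean shortcut; the paper obtains the same equivalences from the weaker fact that $\fmf{\cM}{X}{X}(\cF)$ is a sheaf for $\cF$ a sheaf, so that only $H^0(\Kos(f_x))=\cO_x/f_x$ contributes to $\Hom^0$.

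Two points need repair. First, the propagation from one point to all of the diagonal is not by ``semicontinuity'': the actual mechanism is that $\pi_{1\ast}\cM$ is locally free, so its rank is \emph{constant} on the connected scheme $X$, and it suffices to show that this rank equals $1$ at a single point; semicontinuity alone would only control fibre dimensions at nearby points. Second, and more seriously, your treatment of case (2.3) is a genuine gap. The upper bound $\dim\Hom^0(\Phi(\Kos(f_x)),\Phi(\cO_x/f_x))\le l(\cO_x/f_x)$ does not by itself supply any ``reverse inequality'' for the stalk. The paper's argument requires introducing the left adjoint $G^0$ of the exact functor $\phi=\fmf{\cM}{X}{X}$ on sheaves supported at $x$; proving by induction on length that the unit $\cF\to\phi(\cF)$ is injective, hence that the counit $G^0(B)\to B$ (with $B=\cO_x/f_x$) is surjective; deducing the \emph{lower} bound $\dim\Hom(G^0(B),B)\ge\ell(B)$, which combined with (2.3) forces equality; and finally a splitting-plus-Nakayama argument showing $j^*G^0(B)\simeq B$, which yields condition (2.2$^*$) and hence $r=1$. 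None of these steps appears in your sketch, and without them condition (2.3) does not pin down the stalk of $\cM$.
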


\begin{proof} The proof is essentially the same as \cite[Thm.
3.8]{HLS??}. We give the details.

The direct is immediate. Let us see the converse. Let us denote
$\Phi=\fmf{\cplx{K}}{X}{Y}$. One knows that $\Phi$ has a right
adjoint $H$ and that $H\circ \Phi\simeq\fmf{\cM}{X}{X}$. Using
condition (1) of Definition \ref{d:ortho}, one sees that $\cM$ is
a sheaf whose support is contained in  the diagonal and
$\pi_{1\ast}\cM$ is locally free. Since $X$ is connected, we can
consider the rank $r$ of $\pi_{1\ast}\cM$, which is nonzero by
condition (2) of Definition \ref{d:ortho}; thus the support of
$\cM$ is the diagonal. To conclude, we have only to prove that
$r=1$.

Since $\cM$ is a sheaf topologically supported on the diagonal and
$\pi_{1\ast}\cM$ is locally free, it follows that  if $\cF$ is a
sheaf, then $\fmf{\cM}{X}{X}(\cF)$ is also a sheaf.

Now assume that $\cplx{K}$ satisfies (2.1) of Definition
\ref{d:ortho}. Then
$$\Hom^0_{\catD(X)}(\cO_X,\fmf{\cM}{X}{X}(\bold{k}(x))) \simeq
\Hom^0_{\catD(Y)} (\fmf{\cplx{K}}{X}{Y}(\cO_X),
\fmf{\cplx{K}}{X}{Y}(\bold{k}(x))) \simeq k.$$ Hence
$\fmf{\cM}{X}{X}(\bold{k}(x))\simeq \bold{k}(x)$; that is,
$j_x^\ast\cM\simeq \bold{k}(x)$, where $j_x\colon
\{x\}\hookrightarrow X$ is the inclusion, and $r=1$.

If $\cplx{K}$ satisfies (2.2) of Definition \ref{d:ortho}, then
$$\aligned \Hom_{\cO_X}(\cO_x/f_x, j_x^\ast\cM)&\simeq
\Hom^0_{\catD(X)}(\Kos(f_x),j_x^\ast\cM)\\ &\simeq
\Hom^0_{\catD(X)}(\Kos(f_x),\fmf{\cM}{X}{X}(\bold{k}(x)))\\
&\simeq \Hom^0_{\catD(Y)} (\fmf{\cplx{K}}{X}{Y}(\Kos(f_x)),
\fmf{\cplx{K}}{X}{Y}(\bold{k}(x))) \simeq k\endaligned$$ for any
system of parameters $f_x$ of $\cO_x$. Hence $j_x^\ast\cM\simeq
\bold{k}(x)$ and $r=1$.

(2.2$^*$) is equivalent to (2.2), because
\[\aligned  \Hom^0_{\catD(Y)} (\fmf{\cplx{K}}{X}{Y}(\Kos(f_x)),
\fmf{\cplx{K}}{X}{Y}(\bold{k}(x)))  &\simeq
\Hom^0_{\catD(X)}(\Kos(f_x),\fmf{\cM}{X}{X}(\bold{k}(x)))\\
&\simeq
\Hom^0_{\catD(X)}(\cO_x/f_x,\fmf{\cM}{X}{X}(\bold{k}(x)))\\ &
\simeq \Hom^0_{\catD(Y)} (\fmf{\cplx{K}}{X}{Y}(\cO_x/f_x),
\fmf{\cplx{K}}{X}{Y}(\bold{k}(x)))\endaligned
\] where the second isomorphism
is due to the fact that $\fmf{\cM}{X}{X}(\bold{k}(x))$ is a sheaf
and to $H^0(\Kos(f_x))=\cO_x/f_x$.

Finally, assume that $\cplx{K}$ satisfies (2.3)  of Definition
\ref{d:ortho} (which is equivalent to (2.3$^*$) by similar
arguments), and let us prove that then condition (2.2$^*$) of
Definition \ref{d:ortho} holds as well.

We already know that if $\cF$ is a sheaf supported at a point $x$,
then $\phi(\cF)=\fmf{\cM}{X}{X}(\cF)$ is also a sheaf supported at
$x$.  Moreover $\phi$ is exact and it has a left adjoint $G^0$
(see the proof of \cite[Thm. 3.8]{HLS??}). Let us denote
$B=\cO_x/f_x$.

First notice that
$$ \Hom^0_{\catD(Y)}(\fmf{\cplx{K}}{X}{Y}(B),
\fmf{\cplx{K}}{X}{Y}(B)) \simeq \Hom_{\cO_X} (B,
\fmf{\cM}{X}{X}(B)) \simeq \Hom_{\cO_X}(G^0(B),B)
 $$  Hence, condition (2.3$^*$) means
that
\[(*) \hskip 2cm  1\leq\dim  \Hom_{\cO_X}(G^0(B),B)\leq
l(B).\hskip 2cm \]

Analogously, condition (2.2$^*$) means that $\Hom_{\cO_X} (G^0(B),
\bold{k}(x))\simeq k$.

Using the exactness of $\phi$, one proves by induction on  the
length $\ell(\cF)$ that the unit map $\cF\to \phi(\cF)$ is
injective for any sheaf $\cF$ supported on $x$.  It follows easily
(see the proof of \cite[Thm. 3.8]{HLS??} for details)  that the
morphism $G^0(\cF)\to \cF$ is an epimorphism. In particular
$\eta\colon G^0(B)\to B$ is surjective, and $\dim
\Hom_{\cO_X}(G^0(B),B)\geq \ell (B)$. By ($\ast$),  $\dim
\Hom_{\cO_X}(G^0(B),B) = \ell(B)$. Now the proof follows as in
\cite[Thm. 3.8]{HLS??}: Let $j\colon\Spec B\hookrightarrow X$ be
the inclusion. The exact sequence of $B$-modules
$$
0\to\cN\to j^*G^0(B)\xrightarrow{j^\ast(\eta)} B\to 0
$$
splits, so that
$$
 0\to  \Hom_{B}(B  ,B)\to \Hom_{B}( j^*G^0(B) ,
B)\to \Hom_{B}( \cN  ,B)\to 0
$$
is an exact sequence. Then, $\Hom_{B}( \cN ,B)=0$ because the two
first terms have the same dimension. Let us see that this implies
$\cN=0$. If $\bold{k}(x)\to B$ is a nonzero, and then injective,
morphism, we have $\Hom_{B}(\cN,\bold{k}(x))=0$ so that $\cN=0$ by
Nakayama's lemma. In conclusion, $j^*G^0(B)\simeq B$, and then
$\Hom_{\cO_X} (G^0(B), \bold{k}(x))\simeq  k$.
\end{proof}

\bibliographystyle{siam}
\bibliography{bibartORG}

\bigskip

\end{document}